\theoremstyle{theorem}
\newtheorem{theorem}{Theorem}[section]
\newtheorem{lemma}[theorem]{Lemma}
\newtheorem{proposition}[theorem]{Proposition}
\theoremstyle{definition}
\newtheorem{definition}[theorem]{Definition}
\theoremstyle{remark}
\newtheorem*{remark}{Remark}
\title{Antiautomorphisms and biantiautomorphisms of some finite abelian groups}
\author{Daniel L\'{o}pez-Aguayo and Servando L\'{o}pez-Aguayo}
\address{Tecnologico de Monterrey, Escuela de Ingenier\'{i}a y Ciencias, Hidalgo, M\'{e}xico.}
\email{dlopez.aguayo@itesm.mx}
\address{Photonics and Mathematical Optics Group, Tecnologico de Monterrey, Monterrey, M\'{e}xico.} 
\email{servando@itesm.mx}
\begin{document}
\maketitle
\begin{abstract} We extend the concepts of antimorphism and antiautomorphism of the additive group of integers modulo $n$, given by Gaitanas Konstantinos in \cite{1}, to abelian groups. We give a lower bound for the number of antiautomorphisms of cyclic groups of odd order and give an exact formula for the number of linear antiautomorphisms of cyclic groups of odd order. Finally, we give a partial classification of the finite abelian groups which admit antiautomorphisms and state some open questions.
\end{abstract}

\section{Introduction}
In this paper we introduce the concept of biantiautomorphism which is a bijective antimorphism in the sense of \cite{1}. The main result of this paper is Theorem $3.8$ which gives a partial classification of the finite abelian groups which admit antiautomorphisms. The main tool for this classification is the use of generalized Wilson's theorem for finite abelian groups, the Frobenius companion matrix and the Chinese Remainder theorem. We also give an exact formula for the number of linear antiautomorphisms of cyclic groups of odd order. \\

We begin by recalling the reader Problem \textbf{2014}, proposed by Gaitanas Konstantinos in \cite{1}: \\

\textbf{Problem 2014}. For every integer $n \geq 2$, let $(\mathbf{Z}_{n},+)$ be the additive group of integers modulo $n$. Define an \emph{antimorphism} of $\mathbf{Z}_{n}$ to be any function $f: \mathbf{Z}_{n} \rightarrow \mathbf{Z}_{n}$ such that $f(x)-f(y) \neq x-y$ whenever $x,y$ are distinct elements of $\mathbf{Z}_{n}$. We say that $f$ is an antiautomorphism of $\mathbf{Z}_{n}$ if $f$ is a bijective antimorphism of $\mathbf{Z}_{n}$. For what values of $n$ does $\mathbf{Z}_{n}$ admit an antiautomorphism? \\

For the reader's convenience we first give a solution to Problem 2014.
\begin{proposition} \label{prop1} Let $n$ be an odd number. Then $\mathbf{Z}_{n}$ admits an antiautomorphism.
\end{proposition}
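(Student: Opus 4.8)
The plan is to exhibit an explicit antiautomorphism rather than argue for existence abstractly. The guiding observation is that the defining condition $f(x) - f(y) \neq x - y$ for distinct $x,y$ is equivalent to the injectivity of the map $g \colon x \mapsto f(x) - x$, since $f(x)-f(y) = x-y$ iff $f(x)-x = f(y)-y$; as $\mathbf{Z}_n$ is finite, injectivity of $g$ is the same as requiring $g$ to be a permutation of $\mathbf{Z}_n$. Thus $f$ is an antiautomorphism precisely when both $f$ and $x \mapsto f(x) - x$ are bijections. This reformulation suggests looking for a linear map $f(x) = cx$, for which bijectivity of $f$ amounts to $\gcd(c,n) = 1$ and bijectivity of $x \mapsto (c-1)x$ amounts to $\gcd(c-1,n) = 1$.

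First I would take $c = 2$, that is, define $f(x) = 2x$. Because $n$ is odd we have $\gcd(2,n) = 1$, so multiplication by $2$ is a bijection of $\mathbf{Z}_n$ and $f$ is a permutation of $\mathbf{Z}_n$.

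Next I would verify the antimorphism condition directly: for distinct $x,y$ we have $f(x) - f(y) = 2(x-y)$, and the equality $2(x-y) = x-y$ would force $x - y = 0$ in $\mathbf{Z}_n$, contradicting $x \neq y$. Hence $f(x) - f(y) \neq x - y$ for all distinct $x,y$, and $f$ is a bijective antimorphism, i.e. an antiautomorphism.

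There is no substantial obstacle here: once the condition is recast as the simultaneous bijectivity of $f$ and $x \mapsto f(x) - x$, the choice $c = 2$ (equivalently $c - 1 = 1$) works uniformly for every odd $n$, since $1$ is coprime to everything and $2$ is coprime to every odd number. The only point requiring care is the elementary check that $\gcd(2,n) = 1$ exactly when $n$ is odd, which is precisely where the hypothesis enters.
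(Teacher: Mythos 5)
Your proof is correct and follows essentially the same strategy as the paper: exhibit a linear map $x \mapsto cx$ with both $c$ and $c-1$ coprime to $n$. The paper chooses $c=-1$ (so that $f$ is automatically bijective and oddness is needed for $c-1=-2$), while you choose $c=2$ (so that $c-1=1$ is automatically fine and oddness is needed for $c$); the two are mirror images of the same idea.
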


\begin{proof} It suffices to consider $\varphi: \mathbf{Z}_{n} \rightarrow \mathbf{Z}_{n}$ defined by $\varphi([t])=-[t]$. 
\end{proof}

\begin{proposition} \label{prop2} Let $n$ be an even number. Then $\mathbf{Z}_{n}$ does not admit an antiautomorphism.
\end{proposition}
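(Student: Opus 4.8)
The plan is to argue by contradiction: assume $n$ is even and that some bijection $f:\mathbf{Z}_{n}\to\mathbf{Z}_{n}$ is an antiautomorphism, and then derive a contradiction from a global summation identity taken over all of $\mathbf{Z}_{n}$.

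First I would reformulate the defining condition. Rearranging $f(x)-f(y)\neq x-y$ gives precisely $f(x)-x\neq f(y)-y$. Hence, setting $g(x):=f(x)-x$, the antimorphism condition says exactly that $g$ is injective on distinct inputs; since $\mathbf{Z}_{n}$ is finite, $g$ is therefore a bijection of $\mathbf{Z}_{n}$. This reformulation is the key move, and I expect it to be the only genuinely delicate step — once the condition is recast as ``$g=f-\mathrm{id}$ is a bijection,'' everything else is a routine parity computation.

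Next I would use the fact that any bijection of $\mathbf{Z}_{n}$ permutes its elements, so it has the same total sum as the identity map, namely $\sum_{x\in\mathbf{Z}_{n}}x=\sum_{k=0}^{n-1}k=\tfrac{n(n-1)}{2}$. Summing the identity $g(x)=f(x)-x$ over all $x$ and invoking that both $f$ and $g$ are bijections yields
\[
\sum_{x} x \;=\; \sum_{x} g(x) \;=\; \sum_{x} f(x) - \sum_{x} x \;\equiv\; \sum_{x} x - \sum_{x} x \;=\; 0 \pmod{n},
\]
so that $\tfrac{n(n-1)}{2}\equiv 0 \pmod{n}$.

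Finally I would compute $\tfrac{n(n-1)}{2}$ modulo $n$ under the assumption that $n$ is even. Writing $n=2m$ gives $\tfrac{n(n-1)}{2}=m(n-1)\equiv -m \equiv \tfrac{n}{2}\pmod{n}$, which is nonzero since $0<\tfrac{n}{2}<n$. This contradicts the congruence just derived, and the proof is complete. I would remark in closing that the very same computation explains why odd $n$ poses no obstruction — there $\tfrac{n-1}{2}$ is an integer, so $\tfrac{n(n-1)}{2}\equiv 0\pmod{n}$ and no contradiction arises — in agreement with Proposition~\ref{prop1}.
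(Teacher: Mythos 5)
Your proof is correct and is essentially the paper's own argument: both rest on the observation that $x\mapsto f(x)-x$ must be a bijection, so that summing over all of $\mathbf{Z}_n$ forces $\sum_x x\equiv 0\pmod n$, which fails for even $n$ since $\frac{n(n-1)}{2}\equiv \frac{n}{2}\not\equiv 0\pmod n$. The only difference is presentational (the paper phrases it as showing the multiset $\{f(x)-x\}$ must have a repeat), so there is nothing further to add.
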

\begin{proof} Suppose that $f$ is an antiautomorphism of $\mathbf{Z}_{2n}$, where $n \geq 2$. We claim that the set $\mathcal{A}=\{f([x])-[x]: [x] \in \mathbf{Z}_{2n}\}$ contains, at least, two repeated elements. If not, then $\mathcal{A}=\mathbf{Z}_{2n}$; and since $f$ is a bijection:
\begin{align*}
[0]&=\displaystyle \sum_{x=0}^{2n-1} \left(f[x]-[x]\right)=\left[\displaystyle \sum_{x=0}^{2n-1} x \right] \equiv [-n] \pmod{2n}
\end{align*}
The result follows.
\end{proof}

\begin{proposition} \label{prop3} Let $p$ be an odd prime. Then $\mathbf{Z}_{p}$ admits at least $p^2-2p$ antiautomorphisms.
\end{proposition}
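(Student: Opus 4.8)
The plan is to obtain the bound by counting only the \emph{affine} antiautomorphisms, i.e. maps of the form $f(x)=ax+b$ with $a,b \in \mathbf{Z}_{p}$; these alone already number exactly $p^2-2p$, so producing them suffices. First I would settle bijectivity: since $p$ is prime, $\mathbf{Z}_{p}$ is a field, so $f(x)=ax+b$ is a bijection precisely when $a \not\equiv 0 \pmod p$.

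Next I would translate the antimorphism condition into a condition on $a$. For distinct $x,y$ we have $f(x)-f(y)=a(x-y)$, so the requirement $f(x)-f(y)\neq x-y$ becomes $(a-1)(x-y)\neq 0$. Because $\mathbf{Z}_{p}$ has no zero divisors and $x-y\neq 0$, this holds for every pair of distinct elements if and only if $a\neq 1$. Thus an affine map is an antiautomorphism exactly when $a\in\mathbf{Z}_{p}\setminus\{0,1\}$, with $b$ unrestricted.

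Finally I would count. The coefficient $a$ may be any of the $p-2$ nonzero elements different from $1$, and $b$ may be any of the $p$ elements of $\mathbf{Z}_{p}$; moreover distinct pairs $(a,b)$ yield distinct functions, since one recovers $b=f(0)$ and $a=f(1)-f(0)$. Hence the number of affine antiautomorphisms is $(p-2)\,p=p^{2}-2p$, giving the stated lower bound on the total number of antiautomorphisms of $\mathbf{Z}_{p}$.

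The main point of care, rather than a genuine obstacle, is to invoke the field structure of $\mathbf{Z}_{p}$ at two places: once to characterize bijectivity, and once to deduce that $(a-1)(x-y)\neq 0$ for all distinct $x,y$ is equivalent to the single condition $a\neq 1$. The injectivity of the parametrization $(a,b)\mapsto f$ then makes the count exact for the affine family.
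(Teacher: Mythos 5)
Your proposal is correct and follows essentially the same route as the paper: the paper also takes the multiplication maps $\varphi_{a}([t])=[at]$ with $[a]\in\mathbf{Z}_{p}^{\ast}\setminus\{[1]\}$ and then their translations $\psi_{a,b}(t)=\varphi_{a}(t)+b$, i.e.\ exactly the affine family $x\mapsto ax+b$ with $a\neq 0,1$. You simply spell out the verification (the antimorphism condition reducing to $a\neq 1$ via the field structure, and the injectivity of $(a,b)\mapsto f$) that the paper leaves implicit.
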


\begin{proof} Let $\mathbf{Z}_{p}^{\ast}$ denote the group of units of $\mathbf{Z}_{p}$. Fix $[a] \in \mathbf{Z}_{p}^{\ast} \setminus \{[1]\}$ and for each $[t] \in \mathbf{Z}_{p}$, define $\varphi_{a}([t])=[at]$. It is immediate to see that $\varphi_{a}$ is an antiautomorphism. Thus we obtain at least $p-2$ antiautomorphisms. To obtain $p^2-2p$ antiautomorphisms, it suffices to consider all the translation maps of $\varphi_{a}$; that is, $\psi_{a,b}(t):=\varphi_{a}(t)+b$ where $b \in \mathbf{Z}_{p}$.
\end{proof}

\begin{remark} Note that the number of antiautomorphisms of $\mathbb{Z}_{p}$ is bounded above by the number of injective maps $\mathbb{Z}_{p} \rightarrow \mathbb{Z}_{p}$ which have a unique fixed point; this is given by $!(p-1) \cdot p$ where $!(p-1)$ denotes the subfactorial of $p-1$.
\end{remark}

We now give a lower bound for the number of antiautomorphisms of cyclic groups of odd prime power order.
\begin{proposition} \label{prop4} Let $\alpha \geq 2$ and let $p$ be an odd prime. Then $\mathbf{Z}_{p^{\alpha}}$ contains at least $p^{2\alpha}-2p^{2\alpha-1}$ antiautomorphisms.
\end{proposition}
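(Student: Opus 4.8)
The plan is to generalize the construction of Proposition \ref{prop3} from the field $\mathbf{Z}_p$ to the ring $\mathbf{Z}_{p^{\alpha}}$, the only new feature being that $\mathbf{Z}_{p^{\alpha}}$ has nontrivial zero divisors. First I would fix a residue $a$ and examine the linear map $\varphi_a([t]) = [at]$, determining precisely for which $a$ it is an antiautomorphism. Two conditions must hold. For $\varphi_a$ to be a bijection, $a$ must be a unit modulo $p^{\alpha}$, i.e.\ $p \nmid a$. For the antimorphism property, I would compute $\varphi_a(x) - \varphi_a(y) = (a-1)(x-y)$, so the requirement $\varphi_a(x) - \varphi_a(y) \neq x - y$ for $x \neq y$ becomes $(a-1)z \not\equiv 0 \pmod{p^{\alpha}}$ for every nonzero $z$. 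A short argument shows this holds exactly when $a-1$ is a unit, i.e.\ $p \nmid (a-1)$: if $p \mid (a-1)$ and $a \neq 1$, writing $a - 1 = p^{k} u$ with $1 \le k < \alpha$ and $u$ a unit, the nonzero element $z = p^{\alpha - k}$ witnesses a failure, while $a = 1$ gives the identity, which fails immediately.

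The second step is to count the admissible values of $a$, those with $p \nmid a$ and $p \nmid (a-1)$. The number of units modulo $p^{\alpha}$ is $\phi(p^{\alpha}) = p^{\alpha} - p^{\alpha - 1}$. Among all residues, exactly $p^{\alpha - 1}$ satisfy $a \equiv 1 \pmod{p}$, and every such residue is automatically a unit; removing these from the units leaves $(p^{\alpha} - p^{\alpha-1}) - p^{\alpha-1} = p^{\alpha} - 2p^{\alpha - 1}$ admissible values of $a$.

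Finally, exactly as in Proposition \ref{prop3}, I would compose each admissible $\varphi_a$ with all $p^{\alpha}$ translations, setting $\psi_{a,b}([t]) = [at + b]$ for $b \in \mathbf{Z}_{p^{\alpha}}$. Since $\psi_{a,b}(x) - \psi_{a,b}(y) = \varphi_a(x) - \varphi_a(y)$, translation preserves both bijectivity and the antimorphism property, so every $\psi_{a,b}$ is again an antiautomorphism; and evaluating at $t = 0$ and then $t = 1$ shows that distinct pairs $(a,b)$ give distinct maps. Multiplying the two counts yields $(p^{\alpha} - 2p^{\alpha - 1}) \cdot p^{\alpha} = p^{2\alpha} - 2p^{2\alpha - 1}$ antiautomorphisms, as claimed. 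The main obstacle — really the only step demanding more than in the prime case — is the characterization in the first paragraph: over a field ``$a \neq 0,1$'' suffices, whereas over $\mathbf{Z}_{p^{\alpha}}$ one must verify that the antimorphism condition is equivalent to $a - 1$ being a \emph{unit} rather than merely nonzero, which is where the zero divisors enter.
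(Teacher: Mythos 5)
Your proposal is correct and follows essentially the same route as the paper: both reduce the problem to counting residues $a$ with $(a,p^{\alpha})=(a-1,p^{\alpha})=1$, obtain $p^{\alpha}-2p^{\alpha-1}$ such values, and then multiply by the $p^{\alpha}$ translations. Your write-up is somewhat more explicit than the paper's in verifying that the antimorphism condition is equivalent to $a-1$ being a unit and that distinct pairs $(a,b)$ yield distinct maps, but the underlying argument is identical.
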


\begin{proof} It suffices to compute the cardinality of the set $W=\{a \in [2,p^{\alpha}-1]: (a,p^{\alpha})=(a-1,p^{\alpha})=1\}$. Let $X=\{a \in [2,p^{\alpha}-1]: (a,p^{\alpha})=1\}$ and $Y=\{a \in [2,p^{\alpha}-1]: a \equiv 1 \pmod p\}$. Then $|W|=|X \setminus Y|=\varphi(p^{\alpha})-1-(p^{\alpha-1}-1)=p^{\alpha}-2p^{\alpha-1}$, where $\varphi$ denotes Euler's totient function. Therefore we obtain at least $p^{\alpha}-2p^{\alpha-1}$ antiautomorphisms, and by considering translations we get $p^{\alpha}(p^{\alpha}-2p^{\alpha-1})=p^{2\alpha}-2p^{2\alpha-1}$.
\end{proof}

\begin{remark} In Section $3$ we give an exact formula for the number of linear antiautomorphisms of cyclic groups of odd order. 
\end{remark}

\section{Antiautomorphisms}
\begin{definition}[antiautomorphism] Let $G$ be an abelian group and let $f: G \rightarrow G$ be any function. We say that $f$ is an \emph{antimorphism} if the map $id_{G}-f$ is injective. We say that an antimorphism $f$ is an \emph{antiautomorphism} of $G$ if $f$ is a bijection.
\end{definition}

\begin{remark} If $G$ is finite, then $id_{G}-f$ is bijective if and only if $id_{G}-f$ is injective/surjective.
\end{remark}

The following Proposition generalizes Proposition \ref{prop1}; moreover, it implies that any torsion-free abelian group admits an antiautomorphism.

\begin{proposition} \label{prop5} Let $G$ be an abelian group. Then the map $f: GÊ\rightarrow G$ given by $f(x)=-x$ is an antiautomorphism if and only if $G$ has no element of order $2$.
\end{proposition}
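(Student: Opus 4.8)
The plan is to prove both directions by analyzing when the map $id_G - f$ is injective, since $f(x) = -x$ means $(id_G - f)(x) = x - (-x) = 2x$. So the entire statement reduces to: the doubling map $x \mapsto 2x$ is injective (equivalently bijective, by finiteness in the finite case, though here $G$ is general) if and only if $G$ has no element of order $2$. I would first record this reduction explicitly, since it immediately converts the antiautomorphism condition into a clean statement about the multiplication-by-$2$ endomorphism.

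For the forward direction I would argue contrapositively: suppose $G$ has an element $a$ of order $2$. Then $2a = 0 = 2 \cdot 0$ while $a \neq 0$, so the doubling map is not injective, hence $id_G - f$ is not injective and $f$ is not an antimorphism (in particular not an antiautomorphism). For the converse, assume $G$ has no element of order $2$ and suppose $2x = 2y$; then $2(x - y) = 0$, so $x - y$ is an element whose order divides $2$, forcing $x - y = 0$ since there are no elements of order exactly $2$ and the only element of order $1$ is the identity. This gives injectivity of the doubling map, hence injectivity of $id_G - f$, so $f$ is an antimorphism.

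The remaining point is to confirm that $f(x) = -x$ is a bijection on any abelian group, which is clear since it is its own inverse ($-(-x) = x$); combined with the antimorphism property this upgrades $f$ to an antiautomorphism. I would state this bijectivity at the outset so that the core of the argument focuses solely on the injectivity of doubling.

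I do not anticipate a genuine obstacle here, as the argument is short and structural; the only subtlety worth flagging is that, because $G$ need not be finite, I must establish injectivity of $id_G - f$ directly rather than invoking the finite remark that injectivity and surjectivity coincide. The key algebraic fact doing all the work is the elementary observation that $2x = 2y \iff 2(x-y) = 0$, so that the existence of a nonzero element killed by doubling is exactly the existence of an element of order $2$.
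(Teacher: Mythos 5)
Your proof is correct. The paper actually states Proposition \ref{prop5} without giving a proof at all, so there is nothing to compare against; your argument --- observing that $f(x)=-x$ is always a bijection and that $(id_G - f)(x) = 2x$, so the antimorphism condition is exactly injectivity of doubling, which fails precisely when some nonzero element satisfies $2a=0$ --- is the standard and complete justification, and you correctly avoid relying on the finiteness remark since $G$ is an arbitrary abelian group here.
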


Recall that every cyclic group of even order has a unique element of order $2$; hence the following result is a natural generalization of Proposition \ref{prop2}.  

\begin{proposition} \label{prop6} Let $G$ be a finite abelian group that has exactly one element of order $2$. Then $G$ does not admit antiautomorphisms.
\end{proposition}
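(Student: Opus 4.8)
The plan is to generalize the summation argument used in the proof of Proposition \ref{prop2}. Suppose, for contradiction, that $f$ is an antiautomorphism of $G$. By definition the map $g := id_{G} - f$ is injective, and since $G$ is finite, $g$ is in fact a bijection. I would then compute the sum $\sum_{x \in G} g(x)$ in two different ways and extract a contradiction from the assumption on the elements of order $2$.

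First, because $g$ is a bijection of $G$, the family $\{g(x) : x \in G\}$ runs over all of $G$, so $\sum_{x \in G} g(x) = \sum_{x \in G} x$. Second, writing $g(x) = x - f(x)$ and using that $f$ is itself a bijection (hence $\sum_{x \in G} f(x) = \sum_{x \in G} x$), one gets $\sum_{x \in G} g(x) = \sum_{x \in G} x - \sum_{x \in G} f(x) = 0$. Comparing the two evaluations forces $\sum_{x \in G} x = 0$ in $G$.

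It remains to show that this is impossible under the hypothesis. Here I would invoke the additive form of the generalized Wilson theorem for finite abelian groups: pairing each element with its inverse, every element that is not its own inverse cancels, so $\sum_{x \in G} x$ reduces to the sum of the elements of order dividing $2$. These elements form an elementary abelian $2$-subgroup, and when $G$ has \emph{exactly} one element of order $2$, say $t$, this sum is precisely $t$. Since $t \neq 0$, this contradicts $\sum_{x \in G} x = 0$, and the proposition follows.

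I expect the main obstacle to be the clean justification of this last step, namely that $\sum_{x \in G} x$ equals the unique element $t$ of order $2$. The reduction of $g$ from injective to bijective on a finite group and the double-counting identity are routine; the substantive content is entirely the value of the group sum, supplied by the generalized Wilson theorem already announced as a principal tool in the introduction.
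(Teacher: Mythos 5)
Your argument is correct and is essentially the paper's own proof: both deduce $\sum_{x\in G}x=0$ from the bijectivity of $id_G-f$ and of $f$, and both contradict this with the generalized Wilson theorem, which gives $\sum_{x\in G}x=t\neq 0$ when $G$ has a unique element $t$ of order $2$. The final step you flag as the main obstacle is exactly the cited result \cite[Theorem 2.4]{2}, and your pairing sketch for it is the standard justification.
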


\begin{proof} Using generalized Wilson's theorem for finite abelian groups \cite[Theorem 2.4]{2}, we have that if $g$ is the unique element of order $2$ then $\displaystyle \sum_{h \in G} h =g$. Now suppose for the sake of contradiction that $f$ is an antiautomorphism of $G$. Since $id_{G}-f$ is a bijection, then $0=\displaystyle \sum_{h \in G} (h-f(h))=\displaystyle \sum_{h \in G} h = g$, a contradiction. It follows that $G$ does not admit antiautomorphisms.
\end{proof}

We will make use of the following lemma, whose straightforward proof we omit.

\begin{lemma} \label{lemma7} Let $G_{1},\ldots,G_{n}$ be abelian groups and let $f_{1},\ldots,f_{n}$ be antiautomorphisms of $G_{1},\ldots,G_{n}$, respectively. Then the direct sum map $\varphi=f_{1} \oplus \cdots \oplus f_{n}: \displaystyle \bigoplus_{i=1}^{n} G_{i} \rightarrow \displaystyle \bigoplus_{i=1}^{n}G_{i}$ given by $(t_{1},\ldots,t_{n}) \mapsto (f_{1}(t_{1}))\ldots, f_{n}(t_{n}))$is an antiautomorphism.
\end{lemma}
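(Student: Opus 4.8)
The plan is to prove Lemma \ref{lemma7} directly from the definition of antiautomorphism given in Section 2, namely that $\varphi$ is an antiautomorphism precisely when $\varphi$ is a bijection and $id - \varphi$ is injective. First I would verify the bijectivity of the direct sum map $\varphi = f_1 \oplus \cdots \oplus f_n$. This is immediate and purely set-theoretic: a direct sum (equivalently, Cartesian product) of bijections is a bijection, since the inverse is simply $f_1^{-1} \oplus \cdots \oplus f_n^{-1}$, and each $f_i^{-1}$ exists because each $f_i$ is by hypothesis an antiautomorphism, hence a bijection.

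The substantive step is to check that $id - \varphi$ is injective on $\bigoplus_{i=1}^{n} G_i$. The key observation is that the identity map on the direct sum decomposes coordinatewise, so that $id_{\bigoplus G_i} - \varphi$ acts on a tuple $(t_1,\ldots,t_n)$ by sending it to $\bigl(t_1 - f_1(t_1), \ldots, t_n - f_n(t_n)\bigr)$; that is, $id - \varphi = (id_{G_1} - f_1) \oplus \cdots \oplus (id_{G_n} - f_n)$. Since each $f_i$ is an antiautomorphism of $G_i$, each map $id_{G_i} - f_i$ is injective by definition. A coordinatewise direct sum of injective maps is injective: if two tuples have equal images, then they agree in each coordinate by injectivity of the corresponding $id_{G_i} - f_i$, hence the tuples are equal. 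I would then conclude that $id - \varphi$ is injective, so $\varphi$ is an antimorphism, and combined with bijectivity, an antiautomorphism.

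There is essentially no obstacle here, which is presumably why the authors describe the proof as straightforward and omit it; the only point requiring a moment of care is the algebraic identity that subtraction (the map $id - \varphi$) distributes over the direct sum decomposition, i.e. that applying $id - \varphi$ coordinatewise coincides with taking the direct sum of the individual maps $id_{G_i} - f_i$. This follows because both the group operation and the identity map respect the coordinatewise structure of $\bigoplus_{i=1}^{n} G_i$. Once this identity is in hand, the result reduces to the two elementary facts that products of bijections are bijective and products of injections are injective.
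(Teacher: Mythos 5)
Your proof is correct and is exactly the straightforward argument the paper has in mind when it omits the proof: you observe that $id-\varphi$ decomposes coordinatewise as $(id_{G_1}-f_1)\oplus\cdots\oplus(id_{G_n}-f_n)$, and then use that direct sums of injections are injective and direct sums of bijections are bijective. Nothing is missing.
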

The following result gives an extension of Proposition \ref{prop3}.  \\

\begin{proposition} \label{prop8} Let $G$ be a cyclic group of order $p_{1}^{\alpha_{1}} \cdots p_{k}^{\alpha_{k}}$ where $p_{1},\ldots,p_{k}$ are distinct odd primes, and $\alpha_{i} \geq 1$ for all $i \in \{1,\ldots,k\}$. Then $G$ contains at least $\displaystyle \prod_{i=1}^{k} (p_{i}^{2\alpha_{i}}-2p_{i}^{2\alpha_{i}-1})$ antiautomorphisms.
\end{proposition}

\begin{proof} By the Chinese Remainder Theorem, $G$ is isomorphic to $\displaystyle \bigoplus_{i=1}^{k} \mathbf{Z}_{p_{i}}^{\alpha_{i}}$. By Proposition \ref{prop4}, for each $i \in \{1,\ldots,k\}$, we get $p_{i}^{2\alpha_{i}}-2p_{i}^{2\alpha_{i}-1}$ antiautomorphisms. Now applying Lemma \ref{lemma7} yields the desired result. \end{proof}

Even though $\mathbf{Z}_{2}$ does not admit antiautomorphisms, we now show that for every $r \geq 2$, the group $\mathbf{Z}_{2}^{r}$ always admits antiautomorphisms; here $\mathbf{Z}_{2}^{r}$ denotes the direct sum of $r$-copies of $\mathbf{Z}_{2}$. 
\begin{proposition} \label{prop9} For every $r \geq 2$, the group $\mathbf{Z}_{2}^{r}$ admits antiautomorphisms. 
\end{proposition}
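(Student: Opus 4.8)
The plan is to produce, for each $r\ge 2$, a single \emph{linear} antiautomorphism, realized by an invertible matrix over the field $\mathbf{Z}_{2}$. Since $G=\mathbf{Z}_{2}^{r}$ is a vector space over $\mathbf{Z}_{2}$, a linear map $f(x)=Ax$ with $A\in M_{r}(\mathbf{Z}_{2})$ is a bijection if and only if $A$ is invertible, and the map $id_{G}-f$ has matrix $I-A=I+A$ (recall that $-1=1$ in $\mathbf{Z}_{2}$). Hence $f$ is an antiautomorphism precisely when both $A$ and $I+A$ are invertible over $\mathbf{Z}_{2}$. In this way the whole problem reduces to exhibiting, for each $r\ge 2$, a matrix $A$ with $\det A\neq 0$ and $\det(I+A)\neq 0$.

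To build such an $A$, I would take it to be the Frobenius companion matrix $C$ of a conveniently chosen monic polynomial $p(x)$ of degree $r$, so that the characteristic polynomial $\chi_{C}(x)=\det(xI-C)$ equals $p(x)$. The two invertibility conditions then translate into value conditions on $p$: over $\mathbf{Z}_{2}$ one has $\det C=\chi_{C}(0)=p(0)$ and $\det(I+C)=\det(I-C)=\chi_{C}(1)=p(1)$, where all sign ambiguities coming from $\chi_{C}(0)=(-1)^{r}\det C$ are irrelevant since $-1=1$. Thus it suffices to choose a monic $p$ of degree $r$ with $p(0)=1$ and $p(1)=1$.

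Such a polynomial can be written down explicitly, namely $p(x)=x^{r}+x+1$. Indeed $p(0)=1$ and $p(1)=1+1+1=1$ in $\mathbf{Z}_{2}$, and the hypothesis $r\ge 2$ is exactly what guarantees that the $x$-term and the leading $x^{r}$-term are distinct, so that $p$ is a genuine monic polynomial of degree $r$ with nonzero constant term. (For $r=1$ the construction degenerates into the constant polynomial $1$, which matches the fact, recorded earlier, that $\mathbf{Z}_{2}$ itself admits no antiautomorphism.) Letting $A=C$ be the companion matrix of this $p$, the determinant computations above give $\det A=p(0)=1\neq 0$ and $\det(I+A)=p(1)=1\neq 0$, so $f(x)=Ax$ is the desired antiautomorphism of $\mathbf{Z}_{2}^{r}$.

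I do not expect a genuine obstacle in this argument: the only step demanding a little care is verifying the two determinant identities $\det C=p(0)$ and $\det(I-C)=p(1)$, which are routine facts about the companion matrix once one keeps in mind that characteristic-polynomial signs vanish over $\mathbf{Z}_{2}$. The substance of the proof is really the opening reduction—that a linear $f=A\cdot$ is an antiautomorphism exactly when $A$ and $I+A$ are simultaneously invertible—together with the one-line choice $p(x)=x^{r}+x+1$, whose degree constraint $r\ge 2$ is precisely the hypothesis of the Proposition.
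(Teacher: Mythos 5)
Your proof is correct, and it takes a genuinely different route from the paper's. The paper handles Proposition \ref{prop9} by exhibiting explicit \emph{nonlinear} antiautomorphisms of $\mathbf{Z}_{2}^{2}$ and $\mathbf{Z}_{2}^{3}$ (the latter located by a computer search), and then assembles the general case by writing $r$ as $2k$ or $2t+3$ and taking direct sums via Lemma \ref{lemma7}. You instead give a single uniform linear construction: the companion matrix of $p(x)=x^{r}+x+1$ works for every $r\ge 2$ because $p(0)=p(1)=1$ in $\mathbf{Z}_{2}$, and over $\mathbf{Z}_{2}$ a linear $f=A\cdot$ is an antiautomorphism precisely when neither $0$ nor $1$ is an eigenvalue of $A$. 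This is essentially the ``linear algebra approach'' that the paper only sketches in the remark following Proposition \ref{prop9} and deploys in Proposition \ref{prop10} — except that there the authors reach for an \emph{irreducible} polynomial of degree $r$, which is more than is needed; you correctly isolate the two value conditions $p(0)\neq 0$ and $p(1)\neq 0$ as the whole content of the argument. Your version buys uniformity, avoids the computer search and the even/odd case split, and actually proves something stronger, since your maps are linear and hence are biantiautomorphisms in the sense of the paper's Definition in Section 3. What the paper's version buys in exchange is a stock of nonlinear examples, which it later uses to observe that not every antiautomorphism is linear, and an illustration of the direct-sum lemma in action.
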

\begin{proof} We first deal with the case $r=2$, the Klein four-group. In this case, the map $f: \mathbf{Z}_{2}^{2} \rightarrow \mathbf{Z}_{2}^{2}$ given by:
\begin{align*}
&(1,1) \mapsto (0,0) \\ 
&(0,1) \mapsto (0,1) \\
&(0,0) \mapsto (1,0) \\
&(1,0) \mapsto (1,1)
\end{align*}
is readily verified to be an antiautomorphism (in fact, there are $8$ antiautomorphisms of $\mathbf{Z}_{2}^{2}$). In the case $r=2k$ is an even number greater or equal than $4$, then by Lemma \ref{lemma7}, we get that the map $\underbrace{f \oplus f \oplus \cdots \oplus f}_{\text{$k$-times}}$ is an antiautomorphism of $\mathbf{Z}_{2}^{r}$. Therefore, it suffices to prove the result when $r$ is an odd number greater or equal than $3$. \\

We now deal with the group $\mathbf{Z}_{2}^{3}$. We are required to find a bijection $f$ of $\mathbf{Z}_{2}^{3}$ such that $id_{\mathbf{Z}_{2}^{3}}-f$ is also a bijection. Note that $\mathbf{Z}_{2}^{3}$ has $8!=40320$ bijections. With the aid of Matlab (see the Appendix) one can find all antiautomorphisms of $\mathbf{Z}_{2}^{3}$. In fact, there are $384$ such antiautomorphisms; although for our purposes we only need to find one. An explicit antiautomorphism $\varphi$ of $\mathbf{Z}_{2}^{3}$ is given by:
\begin{align*}
& (1,1,1) \mapsto (0,0,0) \\
& (1,0,1) \mapsto (0,0,1) \\
& (0,1,1) \mapsto (0,1,0) \\
& (0,0,1) \mapsto (0,1,1)  \\
& (0,1,0) \mapsto (1,0,0) \\
& (0,0,0) \mapsto (1,0,1)  \\
& (1,1,0) \mapsto (1,1,0) \\
& (1,0,0) \mapsto (1,1,1) 
\end{align*}
Note that every odd number, greater or equal than $5$, can be written in the form $2t+3$ where $t \geq 1$. Then if $r=2t+3$, by Lemma \ref{lemma7} we have that $\underbrace{f \oplus f \cdots \oplus f}_{\text{$t$-times}} \oplus  \ \varphi$ is an antiautomorphism of $\mathbf{Z}_{2}^{r}$. This completes the proof. 
\end{proof}

\begin{remark} If one prefers to avoid coding, we now present a linear algebra approach to provide examples of (some) antiautomorphisms of certain finite elementary $p$-abelian groups. Let $\alpha \geq 2$ and suppose we would like to give an example of an antiautomorphism of $\mathbf{Z}_{p}^{\alpha}$. Recall that the set of all bijective $\mathbf{Z}_{p}$-linear maps $\mathbf{Z}_{p}^{\alpha} \rightarrow \mathbf{Z}_{p}^{\alpha}$ can be the identified with the general linear group of matrices $\operatorname{GL}_{\alpha}(\mathbf{Z}_{p})$. Since the set of antiautomorphisms of $\mathbf{Z}_{p}^{\alpha}$ contains $\operatorname{GL}_{\alpha}(\mathbf{Z}_{p})$, it suffices to find an invertible matrix $A \in \mathbf{Z}_{p}^{\alpha \times \alpha}$ such that $1$ is not an eigenvalue of $A$. For instance, if $p=2$ and $\alpha=3$ one can take the following matrix: 
\[A=
\begin{bmatrix}
1 &  1 & 0 \\
0 & 1 & 1 \\
1 & 1 & 1
\end{bmatrix}.
\]
Note that the characteristic polynomial of $A$ is equal to $f(t)=1+t^2+t^3$ and $f(1) \neq 0$ in $\mathbf{Z}_{2}$. It follows that the linear map $h: \mathbf{Z}_{2}^{3} \rightarrow \mathbf{Z}_{2}^{3}$ defined by $h([x],[y],[z])=([x+y],[y+z],[x+y+z])$ is an antiautomorphism of $\mathbf{Z}_{2}^{3}$. Observe that none of the maps given in Proposition \ref{prop9} are linear (because they do not fix the identity); hence not every antiautomorphism is necessarily linear.
\end{remark}
We also have the following result.
\begin{proposition} \label{prop10} For every $m,n \geq 2$, the group $\mathbf{Z}_{2^{m}}^{n}$ admits antiautomorphisms.
\end{proposition}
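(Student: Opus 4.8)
The plan is to produce a \emph{linear} antiautomorphism, exploiting the same idea as in the remark following Proposition \ref{prop9}. A $\mathbf{Z}_{2^{m}}$-linear endomorphism of $\mathbf{Z}_{2^{m}}^{n}$ is given by a matrix $A \in \mathbf{Z}_{2^{m}}^{n \times n}$; it is a bijection precisely when $\det(A)$ is a unit of $\mathbf{Z}_{2^{m}}$, and the associated map $id - f$ has matrix $I - A$, hence is a bijection precisely when $\det(I-A)$ is a unit. Since the units of $\mathbf{Z}_{2^{m}}$ are exactly the odd residues, the entire task reduces to exhibiting, for each $n \geq 2$, a single matrix $A$ over $\mathbf{Z}_{2^{m}}$ for which both $\det(A)$ and $\det(I-A)$ are odd.

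First I would fix the monic polynomial $p(t) = t^{n}+t+1 \in \mathbf{Z}_{2^{m}}[t]$ (which has degree $n$ for every $n \geq 2$) and let $A$ be its Frobenius companion matrix. Because the characteristic polynomial of a companion matrix is the polynomial itself, we have the identity $\det(tI - A) = p(t)$ in $\mathbf{Z}_{2^{m}}[t]$. Evaluating at $t=0$ gives $(-1)^{n}\det(A) = p(0) = 1$, so $\det(A) = (-1)^{n}$ is a unit; evaluating at $t=1$ gives $\det(I - A) = p(1) = 3$, which is odd and hence a unit of $\mathbf{Z}_{2^{m}}$. Therefore the linear map determined by $A$ is a bijection whose difference with the identity is also a bijection, i.e. an antiautomorphism, and applying it coordinatewise furnishes the required example on $\mathbf{Z}_{2^{m}}^{n}$.

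The only points that need care are the two determinant evaluations, and both are immediate once the companion-matrix identity $\det(tI - A) = p(t)$ is in hand, so I do not expect a genuine obstacle: the choice $p(t) = t^{n}+t+1$ was engineered precisely so that $p(0)$ and $p(1)$ are both odd for every $n \geq 2$. Any monic polynomial of degree $n$ over $\mathbf{Z}_{2^{m}}$ with $p(0)$ and $p(1)$ both odd would serve equally well. Finally, I would emphasize that this construction is uniform in $n$, so that, unlike the proof of Proposition \ref{prop9}, it requires no separate treatment of the even and odd cases.
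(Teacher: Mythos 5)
Your proof is correct, and while it shares the paper's central device (a Frobenius companion matrix acting linearly on $\mathbf{Z}_{2^{m}}^{n}$), the justification is genuinely different and, I would say, cleaner. The paper picks an \emph{irreducible} polynomial $f(t)\in\mathbf{Z}_{2}[t]$ of degree $n$, uses the surjection $\operatorname{GL}_{n}(\mathbf{Z}_{2^{m}})\twoheadrightarrow\operatorname{GL}_{n}(\mathbf{Z}_{2})$ to conclude that the companion matrix is invertible over $\mathbf{Z}_{2^{m}}$, and leaves the invertibility of $I-C(f)$ implicit (it follows because irreducibility in degree $n\geq 2$ forces $f(1)\neq 0$ in $\mathbf{Z}_{2}$). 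You instead observe that irreducibility is far more than is needed: all that matters is that $p(0)$ and $p(1)$ are units, and the explicit choice $p(t)=t^{n}+t+1$ delivers $\det(A)=(-1)^{n}$ and $\det(I-A)=3$ directly over $\mathbf{Z}_{2^{m}}$, with no appeal to the existence of irreducible polynomials of every degree over $\mathbf{F}_{2}$ and no lifting argument. Your version is uniform in $n$, fully explicit, and makes both determinant checks visible, which the paper's one-line ``the result follows'' does not. The only cosmetic slip is the phrase ``applying it coordinatewise'': the matrix $A$ already acts on all of $\mathbf{Z}_{2^{m}}^{n}$, so no coordinatewise assembly is needed; the map $x\mapsto Ax$ is itself the desired antiautomorphism (indeed a biantiautomorphism, in the paper's later terminology).
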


\begin{proof} Recall that the reduction map $\mathbf{Z}_{2^{m}} \rightarrow \mathbf{Z}_{2}$ induces a surjection $\operatorname{GL}_{n}(\mathbf{Z}_{2^{m}}) \twoheadrightarrow \operatorname{GL}_{n}(\mathbf{Z}_{2})$. For every $n \geq 2$, let $f(t) \in \mathbf{Z}_{2}[t]$ be an irreducible polynomial of degree $n$, and let $C(f)$ be its corresponding Frobenius companion matrix of order $n$. Since $C(f) \in \operatorname{GL}_{n}(\mathbf{Z}_{2})$, then $C(f) \in \operatorname{GL}_{n}(\mathbf{Z}_{2^{m}})$ as well. The result follows.
\end{proof}

\section{Biantiautomorphisms}
\begin{definition}[biantiautomorphism] Let $G$ be a finite abelian group and let $f$ be an antiautomorphism of $G$. We say that $f$ is a biantiautomorphism of $G$ if $f$ is also a linear map.
\end{definition}

The next Proposition gives an example of a group that admits an antiautomorphism but not a biantiautomorphism.
\begin{proposition} \label{prop11} The group $\mathbf{Z}_{2} \oplus \mathbf{Z}_{4}$ admits an antiautomorphism but no biantiautomorphism. 
\end{proposition}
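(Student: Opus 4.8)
The statement has two halves, and the plan is to treat them in opposite order, since the negative half drives the positive one. Recall that a biantiautomorphism is by definition a bijective linear antimorphism; since a bijective homomorphism of a finite abelian group is an automorphism, a biantiautomorphism of $G=\mathbf{Z}_{2}\oplus\mathbf{Z}_{4}$ is precisely an automorphism $A$ for which $id_{G}-A$ is injective (equivalently bijective, as $G$ is finite), that is, an automorphism with no nonzero fixed point. So the second assertion amounts to showing that every $A\in\operatorname{Aut}(G)$ fixes some nonzero element.

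To prove this I would exploit the characteristic subgroup $2G=\{(0,0),(0,2)\}$. Since $2G$ is the image of the doubling endomorphism it is fully invariant, hence mapped to itself by every automorphism; and because $2G\cong\mathbf{Z}_{2}$ with $\operatorname{Aut}(\mathbf{Z}_{2})$ trivial, every automorphism restricts to the identity on $2G$. Concretely, an automorphism must send the order-$4$ generator $(0,1)$ to another order-$4$ element $(r,s)$, forcing $s$ odd and therefore $A(0,2)=2A(0,1)=(0,2)$. Thus $(0,2)\in\ker(id_{G}-A)$ for every $A$, so $id_{G}-A$ is never injective and $G$ admits no biantiautomorphism.

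For the existence of an antiautomorphism the same fixed element is the obstacle to the easy constructions: any bijective affine map $x\mapsto Ax+b$ has automorphism linear part $A$, and the computation above shows $id_{G}-A$ kills $(0,2)$, so no affine map (in particular neither $x\mapsto-x$ nor any linear map) can be an antiautomorphism. One is therefore forced to produce a genuinely non-affine bijection, and the plan is simply to exhibit one explicitly by a table and check that both $f$ and $id_{G}-f$ run through all eight elements of $G$. Such an $f$ exists; for instance sending $(0,0),(0,1),(0,2),(0,3),(1,0),(1,1),(1,2),(1,3)$ to $(0,2),(1,0),(1,2),(0,0),(0,1),(0,3),(1,1),(1,3)$ respectively is a bijection whose difference map $x\mapsto x-f(x)$ is again a bijection, as one verifies directly. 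The main obstacle is thus conceptual rather than computational: recognizing that the characteristic $\mathbf{Z}_{2}$ sitting inside $\mathbf{Z}_{4}$ simultaneously obstructs every \emph{linear} antiautomorphism and forces any genuine antiautomorphism to be nonlinear, after which the positive part reduces to displaying a single valid permutation.
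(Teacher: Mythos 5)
Your proposal is correct, and the negative half takes a genuinely different and more elementary route than the paper. The paper rules out biantiautomorphisms by citing the isomorphism $\operatorname{Aut}_{\mathbf{Z}}(\mathbf{Z}_{2}\oplus\mathbf{Z}_{4})\cong D_{4}$, realizing $D_{4}$ as the Heisenberg group over $\mathbf{Z}_{2}$, and checking that every such matrix has characteristic polynomial $(t-1)^{3}$; your argument instead observes that the fully invariant subgroup $2G=\{(0,0),(0,2)\}\cong\mathbf{Z}_{2}$ is fixed pointwise by every automorphism (since $\operatorname{Aut}(\mathbf{Z}_{2})$ is trivial, or concretely because $A(0,1)$ must have order $4$ and hence odd second coordinate), so $(0,2)$ is a universal nonzero fixed point and $id_{G}-A$ is never injective. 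This avoids the external reference and the matrix computation entirely, and as a bonus it explains structurally why no affine map can work, whereas the paper only remarks that its exhibited antiautomorphism is nonlinear because it moves $(0,0)$. For the positive half both proofs proceed identically: exhibit an explicit permutation and verify that the difference map is also a permutation (your table checks out: both $f$ and $id-f$ hit all eight elements). Your argument also generalizes more readily, e.g.\ to any $\mathbf{Z}_{2}\oplus\mathbf{Z}_{2^{k}}$ with $k\ge 2$ one can ask whether $2^{k-1}G$ plays the same role, which the paper's $D_4$-specific computation does not suggest.
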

\begin{proof} We first show that $\mathbf{Z}_{2} \oplus \mathbf{Z}_{4}$ admits an antiautomorphism. Indeed, consider the map:
\begin{align*}
(1,3) \mapsto (0,0) \\
(1,2) \mapsto (0,1) \\
(0,3) \mapsto (0,2) \\
(0,2) \mapsto (0,3) \\
(1,0) \mapsto (1,0) \\
(0,1) \mapsto (1,1) \\
(0,0) \mapsto (1,2) \\
(1,1) \mapsto (1,3)
\end{align*}
which is clearly not linear since it does not fixes $(0,0)$. Let us show that $\mathbf{Z}_{2} \oplus \mathbf{Z}_{4}$ does not admit a biantiautomorphism. By \cite[Lemma 11.1]{3} we have that $\operatorname{Aut}_{\mathbf{Z}}(\mathbf{Z}_{2} \oplus \mathbf{Z}_{4}) \cong D_{4}$, the dihedral group on four letters. Now realize $D_{4}$ as the unitriangular matrix group of degree three over $\mathbf{Z}_{2}$ (a.k.a the Heisenberg group modulo $2$). It is easy to check that every matrix in this group has characteristic polynomial equal to $(t-1)^3$ and thus $1$ is always an eigenvalue. Therefore $\mathbf{Z}_{2} \oplus \mathbf{Z}_{4}$ does not admit a biantiautomorphism, as claimed.
\end{proof}

\begin{remark}
The above Proposition shows that for $2$-groups that contain more than one element of order $2$, it is not always the case that biantiautomorphisms exist (even though in the above case an antiautomorphism exists). 
\end{remark}

\begin{lemma} \label{lem3} Let $p$ be a prime, $\alpha \geq 1$ and let $a \in [2,p^{\alpha}-1]$ be such that $(a,p^{\alpha})=1$. Then a non-identity element $\varphi_{a} \in \operatorname{Aut}(\mathbb{Z}_{p^{\alpha}})$ has no non-trivial fixed point if and only if $(a-1,p^{\alpha})=1$.
\end{lemma}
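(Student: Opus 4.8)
The plan is to reduce the fixed-point condition to a single linear congruence modulo $p^{\alpha}$ and then count its solutions. First I would recall the concrete description $\varphi_{a}([t]) = [at]$; since $(a,p^{\alpha})=1$ this is indeed an automorphism, and it is non-identity precisely because $a \in [2,p^{\alpha}-1]$ guarantees $a \not\equiv 1 \pmod{p^{\alpha}}$. An element $[t]$ is a fixed point of $\varphi_{a}$ if and only if $at \equiv t \pmod{p^{\alpha}}$, equivalently $(a-1)t \equiv 0 \pmod{p^{\alpha}}$. Hence $\varphi_{a}$ has no non-trivial fixed point exactly when $[t]=[0]$ is the unique solution of this congruence.

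The heart of the argument is the standard count of solutions of a linear congruence: $(a-1)t \equiv 0 \pmod{p^{\alpha}}$ has exactly $d = (a-1,p^{\alpha})$ solutions modulo $p^{\alpha}$, namely the multiples of $p^{\alpha}/d$. I would justify this by writing $a-1 = d\,a'$ and $p^{\alpha} = d\,m$ with $(a',m)=1$, so that the congruence is equivalent to $a't \equiv 0 \pmod{m}$, and hence to $t \equiv 0 \pmod{m}$; the solutions modulo $p^{\alpha}$ are then $0, m, 2m, \ldots, (d-1)m$, of which there are $d$.

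With this count in hand both implications are immediate. If $(a-1,p^{\alpha})=1$, then $d=1$ and the only solution is $[t]=[0]$, so $\varphi_{a}$ has no non-trivial fixed point. Conversely, if $\varphi_{a}$ has no non-trivial fixed point, then the congruence has a unique solution, forcing $d=1$, that is, $(a-1,p^{\alpha})=1$. I do not anticipate a genuine obstacle here, as the statement is elementary; the only point requiring care is the solution-counting step. Since we are working over a prime power, one could alternatively observe that $(a-1,p^{\alpha})=1$ is equivalent to $p \nmid (a-1)$, but the gcd computation dispatches the general congruence directly and keeps both directions symmetric.
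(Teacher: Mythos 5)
Your proof is correct and follows essentially the same route as the paper: both reduce the fixed-point condition to the congruence $(a-1)t \equiv 0 \pmod{p^{\alpha}}$, with the paper's reverse direction identical to yours and its forward direction merely exhibiting the explicit fixed point $p^{\alpha-i}$ rather than invoking the general count of $d=(a-1,p^{\alpha})$ solutions. Your counting formulation is a touch more uniform (it handles both implications at once and gives the exact number of fixed points), but it is the same underlying argument.
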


\begin{proof} First suppose that $\varphi_{a}: \mathbb{Z}_{p^{\alpha}} \rightarrow \mathbb{Z}_{p^{\alpha}}$ is a linear automorphism with $0$ as its only fixed point. We claim that $(a-1,p^{\alpha})=1$. Suppose not, then there exists $i \in [1,\alpha-1]$ such that $p^{i}$ divides $a-1$; hence $a=tp^{i}+1$ for some integer $t$. But then $\varphi_{a}(p^{\alpha-i})=ap^{\alpha-i}=(tp^{i}+1)p^{\alpha-i}=tp^{\alpha}+p^{\alpha-i}=p^{\alpha-i}$ in $\mathbb{Z}_{p^{\alpha}}$. It follows that $\varphi_{a}$ has a non-trivial fixed point, a contradiction. Therefore $(a-1,p^{\alpha})=1$. 

On the other hand, suppose that $(a-1,p^{\alpha})=1$ and that there exists $[x]$ such that $\varphi_{a}([x])=[x]$. It follows that $p^{\alpha}$ divides $x(a-1)$. Since $(a-1,p^{\alpha})=1$, then $p^{\alpha}$ divides $x$. It follows that $[x]=[0]$ and $\varphi_{a}$ has no non-trivial fixed point, as claimed.
\end{proof}

\begin{proposition} \label{biantis} Let $\alpha \geq 2$ and let $p$ be an odd prime. Then $\mathbf{Z}_{p^{\alpha}}$ contains exactly $p^{\alpha}-2p^{\alpha-1}$ biantiautomorphisms.
\end{proposition}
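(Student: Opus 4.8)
The plan is to identify the biantiautomorphisms of $\mathbf{Z}_{p^{\alpha}}$ explicitly and then count them. By definition a biantiautomorphism is a linear antiautomorphism, i.e.\ a group automorphism $f$ of $\mathbf{Z}_{p^{\alpha}}$ for which $id - f$ is also bijective. Since $\mathbf{Z}_{p^{\alpha}}$ is cyclic, every $\mathbf{Z}$-linear endomorphism is determined by the image of the generator $[1]$, so it has the form $\varphi_{a}([t]) = [at]$ for a unique $a \in \mathbf{Z}_{p^{\alpha}}$, and the assignment $a \mapsto \varphi_{a}$ is injective. Thus the biantiautomorphisms are in bijection with the set of residues $a$ for which both $\varphi_{a}$ and $id - \varphi_{a}$ are bijective, and it remains to translate these two bijectivity conditions into arithmetic conditions on $a$.

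First I would observe that $\varphi_{a}$ is an automorphism precisely when $(a, p^{\alpha}) = 1$, i.e.\ $p \nmid a$. Next, since $id - \varphi_{a}$ has the same kernel as $\varphi_{a} - id$, the map $id - \varphi_{a}$ is injective exactly when $\varphi_{a}$ has no non-trivial fixed point; because the group is finite this is equivalent to bijectivity. Invoking Lemma \ref{lem3}, a non-identity automorphism $\varphi_{a}$ has no non-trivial fixed point if and only if $(a-1, p^{\alpha}) = 1$, i.e.\ $p \nmid (a-1)$. The value $a = 1$ giving the identity is automatically excluded, since then $(a-1, p^{\alpha}) = p^{\alpha} \neq 1$. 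Hence $\varphi_{a}$ is a biantiautomorphism if and only if $(a, p^{\alpha}) = (a-1, p^{\alpha}) = 1$.

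It then remains only to count the residues $a \in \{0, 1, \ldots, p^{\alpha}-1\}$ satisfying $p \nmid a$ and $p \nmid (a-1)$. This is exactly the cardinality of the set $W$ computed in the proof of Proposition \ref{prop4}: among the $p^{\alpha}$ residues there are $p^{\alpha-1}$ with $p \mid a$ and $p^{\alpha-1}$ with $p \mid (a-1)$, and these two families are disjoint since $p$ cannot divide both $a$ and $a-1$. By inclusion--exclusion the number of admissible $a$ is $p^{\alpha} - 2p^{\alpha-1}$, which, via the bijection $a \mapsto \varphi_{a}$, is the exact number of biantiautomorphisms.

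I expect the only genuine subtlety --- as opposed to the essentially mechanical counting --- to lie in justifying that the correspondence is both exhaustive and injective: every linear antiautomorphism really is one of the maps $\varphi_{a}$, and distinct $a$ yield distinct maps, so that the formula is an \emph{exact} count rather than merely a lower bound as in Proposition \ref{prop8}. Everything else follows from the cyclic structure of $\mathbf{Z}_{p^{\alpha}}$, from Lemma \ref{lem3}, and from the inclusion--exclusion already carried out for Proposition \ref{prop4}.
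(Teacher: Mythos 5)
Your proposal is correct and follows essentially the same route as the paper's proof: identify the linear automorphisms of $\mathbf{Z}_{p^{\alpha}}$ with the multiplication maps $\varphi_{a}$, apply Lemma \ref{lem3} to translate the fixed-point-free condition into $(a-1,p^{\alpha})=1$, and recover the count $p^{\alpha}-2p^{\alpha-1}$ already obtained in the proof of Proposition \ref{prop4}. Your explicit inclusion--exclusion and the remark that $a=1$ is automatically excluded by the condition $(a-1,p^{\alpha})=1$ are just slightly more detailed versions of the same argument.
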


\begin{proof} The non-identity elements of $\operatorname{Aut}_{\mathbf{Z}}(\mathbf{Z}_{p^{\alpha}})$ are given by the multiplication maps $\varphi_{a}([t])=[at]$ where $[a] \in (\mathbf{Z}_{p^{\alpha}})^{\ast} \setminus \{[1]\}$. Therefore, it suffices to count the number of maps $\varphi_{a}$ which have no non-trivial fixed point. By Lemma \ref{lem3}, this happens if and only if $(a-1,p^{\alpha})=1$; and by the proof of Proposition \ref{prop4}, this number is exactly $p^{\alpha}-2p^{\alpha-1}$. This completes the proof.
\end{proof}

\begin{theorem} Let $G$ be a cyclic group of odd order $n$. Then the number of biantiautomorphisms is given by $\displaystyle \prod_{p \in \mathbf{P}_{n}} p^{\alpha-1}(p-2)$, where $\mathbf{P}_{n}$ is the set of prime divisors of $n$ and $\alpha$ is the greatest integer such that $p^{\alpha}$ divides $n$.
\end{theorem}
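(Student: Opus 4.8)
The plan is to reduce the count to a product over the prime-power divisors of $n$, using the Chinese Remainder Theorem to decompose $G$ and then invoking the per-factor counts already established. Write $\mathbf{P}_{n}=\{p_{1},\ldots,p_{k}\}$ and $n=\prod_{i=1}^{k}p_{i}^{\alpha_{i}}$ with each $p_{i}$ an odd prime and $\alpha_{i}=v_{p_{i}}(n)$. Since $G$ is cyclic of order $n$, fix an isomorphism $G\cong\mathbf{Z}_{n}$ and recall that the $\mathbf{Z}$-linear maps $\mathbf{Z}_{n}\rightarrow\mathbf{Z}_{n}$ are exactly the multiplication maps $\varphi_{a}([t])=[at]$, with $\varphi_{a}$ bijective if and only if $(a,n)=1$. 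A biantiautomorphism is therefore a $\varphi_{a}$ with $(a,n)=1$ for which $id_{\mathbf{Z}_{n}}-\varphi_{a}$ is also bijective; as $(id_{\mathbf{Z}_{n}}-\varphi_{a})([t])=[(1-a)t]$, this holds precisely when $(a-1,n)=1$. Hence the number of biantiautomorphisms equals $N(n):=\#\{[a]\in\mathbf{Z}_{n}:(a,n)=(a-1,n)=1\}$.

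Next I would exploit the ring isomorphism $\mathbf{Z}_{n}\cong\bigoplus_{i=1}^{k}\mathbf{Z}_{p_{i}^{\alpha_{i}}}$ furnished by the Chinese Remainder Theorem. Because it is a ring isomorphism, it intertwines the scalar map $\varphi_{a}$ with the direct sum $\bigoplus_{i}\varphi_{a_{i}}$, where $a_{i}\equiv a\pmod{p_{i}^{\alpha_{i}}}$; in particular $\operatorname{Aut}_{\mathbf{Z}}(\mathbf{Z}_{n})\cong\prod_{i}\operatorname{Aut}_{\mathbf{Z}}(\mathbf{Z}_{p_{i}^{\alpha_{i}}})$, so no linear automorphism is overlooked by restricting to this diagonal form. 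Consequently $\varphi_{a}$ is a biantiautomorphism of $\mathbf{Z}_{n}$ if and only if each $\varphi_{a_{i}}$ is a biantiautomorphism of $\mathbf{Z}_{p_{i}^{\alpha_{i}}}$: the forward direction follows because both $\varphi_{a}$ and $id-\varphi_{a}$ decompose as direct sums and a direct sum of group maps is bijective iff each summand is, while the backward direction is exactly Lemma \ref{lemma7} together with the observation that a direct sum of linear maps is linear. Equivalently, at the level of residues, $(a,n)=(a-1,n)=1$ holds iff $(a_{i},p_{i}^{\alpha_{i}})=(a_{i}-1,p_{i}^{\alpha_{i}})=1$ for every $i$, so $N$ is multiplicative over the prime-power factorization and $N(n)=\prod_{i=1}^{k}N(p_{i}^{\alpha_{i}})$.

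It then remains to count biantiautomorphisms of a single factor $\mathbf{Z}_{p^{\alpha}}$ with $p$ odd. For $\alpha\geq 2$ this is precisely Proposition \ref{biantis}, giving $p^{\alpha}-2p^{\alpha-1}=p^{\alpha-1}(p-2)$. For $\alpha=1$, Lemma \ref{lem3} shows a non-identity $\varphi_{a}$ is an antiautomorphism iff $(a-1,p)=1$, i.e. $a\not\equiv 0,1\pmod{p}$; there are exactly $p-2$ such residues, which again equals $p^{\alpha-1}(p-2)$ with $\alpha=1$. Multiplying over $i$ yields $N(n)=\prod_{i=1}^{k}p_{i}^{\alpha_{i}-1}(p_{i}-2)=\prod_{p\in\mathbf{P}_{n}}p^{\alpha-1}(p-2)$, the asserted formula.

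The main obstacle is the bookkeeping of the middle step: one must verify carefully that the CRT isomorphism carries the scalar maps on $\mathbf{Z}_{n}$ to componentwise scalar maps on the direct sum, so that the property of being \emph{linear} (not merely bijective) is preserved by the decomposition and its inverse. Once this correspondence between $\operatorname{Aut}_{\mathbf{Z}}(\mathbf{Z}_{n})$ and $\prod_{i}\operatorname{Aut}_{\mathbf{Z}}(\mathbf{Z}_{p_{i}^{\alpha_{i}}})$ is pinned down, everything else is a routine application of multiplicativity and the single-factor counts. The only place the odd hypothesis intervenes is in guaranteeing a positive contribution $p-2\geq 1$ from each factor; the identical computation returns $0$ the moment $2\mid n$, in agreement with Proposition \ref{prop6}.
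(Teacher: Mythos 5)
Your proof is correct and follows essentially the same route as the paper's: decompose $G$ (and hence $\operatorname{Aut}_{\mathbf{Z}}(G)$) via the Chinese Remainder Theorem and multiply the per-prime-power counts supplied by Proposition \ref{biantis}. The only substantive difference is that you handle the $\alpha=1$ factors separately through Lemma \ref{lem3} --- a worthwhile bit of care, since Proposition \ref{biantis} is stated only for $\alpha\geq 2$ and the paper cites it without comment in that case.
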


\begin{proof} Let $n=\displaystyle \prod_{i=1}^{k} p_{i}^{\alpha_{i}}$ where the $p_{i}$ are distinct odd primes and $\alpha_{i} \geq 1$. By the Chinese Remainder Theorem, there exists an isomorphism of abelian groups $G \cong \displaystyle \bigoplus_{i=1}^{k} \mathbb{Z}_{p_{i}^{\alpha_{i}}}$. As the orders of the direct summands of $G$ are pairwise relatively prime, then $\operatorname{Aut}_{\mathbf{Z}}(G) \cong \displaystyle \bigoplus_{i=1}^{k} \operatorname{Aut}_{\mathbf{Z}}(\mathbf{Z}_{p_{i}^{\alpha_{i}}})$. Therefore the biantiautomorphisms of $G$ are completely determined by the number of biantiautomorphisms of its direct summands. By Proposition \ref{biantis}, for each $i \in \{1,\ldots,k\}$ there are exactly $p_{i}^{\alpha_{i}}-2p_{i}^{\alpha_{i}-1}=p_{i}^{\alpha_{i}-1}(p_{i}-2)$ biantiautomorphisms. Taking the product over all prime divisors of $n$ yields the desired result.
\end{proof}

\begin{definition}[fixed point free automorphism] Let $f$ be an automorphism of a finite abelian group $G$. We say that $f$ is a fixed point free automorphism of $G$ if $f(x) \neq x$ for all $x \in G \setminus \{0\}$.
\end{definition}

The following statement asserts that certain $2$-groups which contain more than one element of order $2$, never admit biantiautomorphisms of prime order. However, this does not rule out the existence of antiautomorphisms or biantiautomorphisms of non-prime order.

\begin{proposition} \label{prop12} Let $d_{1}, \hdots, d_{m}$ be pairwise distinct integers and let $G=\displaystyle \bigoplus_{i=1}^{m} \mathbf{Z}_{2^{d_{i}}}$. Then $G$ does not admit a biantiautomorphism of prime order.
\end{proposition}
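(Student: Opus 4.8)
The plan is to recast the problem in terms of fixed point free automorphisms and then to analyze the induced action on the $2$-torsion. Since a biantiautomorphism is by definition a linear antiautomorphism, a map $f \in \operatorname{Aut}_{\mathbf{Z}}(G)$ is a biantiautomorphism exactly when $id_{G}-f$ is injective; as $f$ is a group homomorphism, so is $id_{G}-f$, and injectivity is equivalent to $\ker(id_{G}-f)=0$, that is, to $f$ being a fixed point free automorphism. Thus it suffices to show that $G=\bigoplus_{i=1}^{m}\mathbf{Z}_{2^{d_{i}}}$, with the $d_{i}$ pairwise distinct, has no fixed point free automorphism of prime order. In fact the argument below produces a nonzero fixed point for \emph{every} automorphism of $G$, so it rules out fixed point free automorphisms of any order.

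First I would reduce to linear algebra over $\mathbf{Z}_{2}$. Let $G[2]=\{x \in G : 2x=0\}$. Each cyclic summand $\mathbf{Z}_{2^{d_{i}}}$ contributes the unique element $u_{i}=2^{d_{i}-1}e_{i}$ of order $2$, where $e_{i}$ generates the $i$-th summand, so $G[2]=\langle u_{1}\rangle \oplus \cdots \oplus \langle u_{m}\rangle \cong \mathbf{Z}_{2}^{m}$. Every $f \in \operatorname{Aut}_{\mathbf{Z}}(G)$ preserves the characteristic subgroup $G[2]$, hence restricts to a $\mathbf{Z}_{2}$-linear automorphism $\bar{f}$ of $G[2]$. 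A nonzero fixed point of $\bar{f}$ is a nonzero fixed point of $f$, so it is enough to show that $\bar{f}-id$ is singular for every $f$.

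The heart of the proof is to show that, after ordering the summands so that $d_{1}<d_{2}<\cdots<d_{m}$, the matrix of $\bar{f}$ in the basis $u_{1},\ldots,u_{m}$ is lower unitriangular. Write $f(e_{j})=\sum_{i}a_{ij}e_{i}$ and recall the standard description of $\operatorname{Hom}(\mathbf{Z}_{2^{d_{j}}},\mathbf{Z}_{2^{d_{i}}})$: the coefficient $a_{ij}$ must be divisible by $2^{d_{i}-d_{j}}$ whenever $d_{i}>d_{j}$. I would draw two consequences. First, reducing modulo $2$ shows that $f$ acts on $G/2G$ by an upper triangular matrix, so the invertibility of $f$ forces every diagonal entry $a_{ii}$ to be odd. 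Second, from $\bar{f}(u_{j})=2^{d_{j}-1}f(e_{j})=\sum_{i}a_{ij}\,2^{d_{j}-1}e_{i}$ one reads off the coefficients in the basis $u_{1},\ldots,u_{m}$: the terms with $d_{i}<d_{j}$ vanish since $2^{d_{j}-1}e_{i}=0$ in $\mathbf{Z}_{2^{d_{i}}}$; the term $i=j$ equals $a_{jj}u_{j}=u_{j}$ because $a_{jj}$ is odd and $u_{j}$ has order $2$; and each term with $d_{i}>d_{j}$, writing $a_{ij}=2^{d_{i}-d_{j}}b_{ij}$, reduces to $b_{ij}u_{i}$ with $i>j$. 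Hence $\bar{f}(u_{j})=u_{j}+\sum_{i>j}c_{ij}u_{i}$, so the matrix of $\bar{f}$ is lower unitriangular, $\bar{f}-id$ is strictly lower triangular and therefore singular, and $\bar{f}$ fixes a nonzero vector of $G[2]$. This exhibits a nonzero fixed point of $f$, and the proposition follows.

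The step I expect to be the main obstacle is establishing the triangular shape cleanly, namely keeping simultaneous track of the divisibility constraints on the $a_{ij}$ and of the reduction of $2^{d_{j}-1}e_{i}$ inside each summand. This is precisely where the hypothesis that the $d_{i}$ be pairwise distinct is indispensable: if two exponents coincided, the corresponding diagonal block of $\bar{f}$ could be an arbitrary invertible $2\times 2$ matrix over $\mathbf{Z}_{2}$ (for example one of order $3$), which may well be fixed point free, so the unitriangular structure would collapse. I would also note that the same upper triangular reduction of $f$ on $G/2G$ shows that $\operatorname{Aut}_{\mathbf{Z}}(G)$ is a $2$-group, which gives an alternative route for the prime-order statement: a biantiautomorphism of prime order would have to be an involution, whose singular $\bar{f}-id$ then yields the contradiction directly.
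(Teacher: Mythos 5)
Your proof is correct, and it takes a genuinely different route from the paper. The paper's argument is short but external: it observes that a fixed point free automorphism $f$ of prime order generates a fixed point free automorphism group $\langle f\rangle$ (this is where primality is used, via $f^{k\ell}=f$ for $k\ell\equiv 1$ modulo the order), and then invokes Corollary 6.10 of Mayr's thesis to conclude that $G$ admits no such group. Your argument is self-contained and elementary: the identification of biantiautomorphisms with fixed point free automorphisms is the same first step, but you then restrict to the characteristic subgroup $G[2]\cong\mathbf{Z}_2^m$, use the divisibility constraints $2^{d_i-d_j}\mid a_{ij}$ for $d_i>d_j$ together with the oddness of the diagonal entries (forced by invertibility on $G/2G$) to show that $\bar{f}$ is lower unitriangular in the basis $u_1,\ldots,u_m$ of the order-two elements, and conclude that $\bar{f}-id$ is singular. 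The computation checks out (for $d_i<d_j$ one has $d_i\le d_j-1$, so $2^{d_j-1}e_i=0$, which is exactly where distinctness of the exponents enters), and in fact your last basis vector $u_m$ is always fixed --- equivalently, $2^{d_{\max}-1}G$ is a characteristic subgroup of order $2$ whose generator every automorphism must fix. What your approach buys is a strictly stronger conclusion: $G$ admits no biantiautomorphism of \emph{any} order, not merely of prime order, which also sharpens the paper's remark that biantiautomorphisms of non-prime order are not ruled out by its method; the cost is a longer argument, whereas the paper's proof is two lines modulo the cited reference.
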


\begin{proof} Suppose that $f: G \rightarrow G$ is a biantiautomorphism of prime order. Since $f$ is an antiautomorphism, then $f$ is fixed point free. Now note that if $f: G \rightarrow G$ is a fixed point free automorphism of prime order, then the cyclic subgroup $\langle f \rangle \subseteq \operatorname{Aut}(G)$ is a fixed point free automorphism group (meaning every non-identity element of it is fixed point free). By \cite[Corollary 6.10]{4} we have that $G$ does not have a fixed point free automorphism group; consequently, no fixed point free automorphism of prime order exists. This completes the proof.
\end{proof}
Using Propositions \ref{prop5}, \ref{prop6}, \ref{prop9} and \ref{prop10}, we obtain a partial classification of the finite abelian groups which admit antiautomorphisms. We summarize this classification in the following
\begin{theorem} Let $G$ be a finite abelian group. Then:
\begin{enumerate}
\item If $G$ has no elements of order $2$, then $G$ admits an antiautomorphism.
\item If $G$ has exactly one element of order $2$, then $G$ does not admit antiautomorphisms.
\item If $G=\mathbf{Z}_{2^{m}}^{n}$, where $n \geq 2$ and $m \geq 1$, then $G$ admits an antiautomorphism.
\end{enumerate}
\end{theorem}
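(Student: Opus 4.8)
The plan is to treat the three parts independently, since each one reduces to a proposition already established earlier in the paper; no new construction is required, and the statement is purely an assembly of prior results.

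For part (1), I would simply invoke Proposition \ref{prop5}: when $G$ has no element of order $2$, the inversion map $f(x)=-x$ has $id_G - f$ equal to multiplication by $2$, which is injective precisely when $G$ is $2$-torsion-free, so $f$ is an antiautomorphism. For part (2), I would invoke Proposition \ref{prop6} directly: the generalized Wilson theorem forces $\sum_{h \in G} h = g$ for the unique involution $g$, which contradicts the vanishing of $\sum_{h \in G}(h - f(h))$ that any antiautomorphism would produce; hence no antiautomorphism exists.

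The only part requiring a small case analysis is (3), and here I would split on the value of $m$. If $m=1$, then $G = \mathbf{Z}_{2}^{n}$ with $n \geq 2$, and Proposition \ref{prop9} supplies an antiautomorphism directly. If instead $m \geq 2$, then both $m \geq 2$ and $n \geq 2$ hold, so Proposition \ref{prop10} applies and again yields an antiautomorphism via the Frobenius companion matrix of an irreducible polynomial over $\mathbf{Z}_{2}$. Assembling the two subcases covers every pair $(m,n)$ with $m \geq 1$ and $n \geq 2$.

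I do not expect a genuine obstacle, since the mathematical content lives entirely in the cited propositions. The single point worth watching is the boundary of part (3): Proposition \ref{prop10} is stated only for $m \geq 2$ and must therefore be supplemented by Proposition \ref{prop9} to cover the elementary abelian case $m=1$, and overlooking this would leave a gap. I would also state explicitly that the classification is \emph{partial}: the three hypotheses are mutually exclusive but not exhaustive over all finite abelian groups (for instance $\mathbf{Z}_{2} \oplus \mathbf{Z}_{4}$, treated in Proposition \ref{prop11}, has more than one involution yet is not of the homogeneous form in part (3)), so nothing needs to be proved about groups falling outside the three listed cases.
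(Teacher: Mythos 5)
Your proposal is correct and matches the paper exactly: the paper derives the theorem by citing Propositions \ref{prop5}, \ref{prop6}, \ref{prop9} and \ref{prop10}, which is precisely your assembly, including the split of part (3) into the case $m=1$ (Proposition \ref{prop9}) and $m\geq 2$ (Proposition \ref{prop10}). Your explicit remark that the three cases are not exhaustive is also consistent with the paper's description of the classification as partial.
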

Finally, we end the paper with the following questions: \\

\textbf{Question $1$}. Is there an exact formula for the number of antiautomorphisms of $\mathbf{Z}_{p}^{n}$ for any prime number $p$ and $n \geq 2$? \\

In \cite{6}, C. Ryan gives a recursive relation for computing the exact number of biantiautomorphisms of $\mathbf{Z}_{p}^{n}$ for any prime number $p$ and $n \geq 2$. Note that this problem is equivalent to finding the number of elements of $\operatorname{GL}_{n}(\mathbf{Z}_{p})$ which have no non-trivial fixed point. However, the general case is not covered since an antiautomorphism is not necessarily linear. \\
 
\textbf{Question $2$}. Which finite abelian $2$-groups admit antiautomorphisms but not biantiautomorphisms? \\

\textbf{Acknowledgments.} We thank Pete L. Clark, Florian Luca and Efr\'{e}n Per\'{e}z Terrazas for very useful remarks and helpful comments on a first draft of this paper. We also thank Peter Mayr for clarifying some results of his thesis.

\end{document}